\newcommand{\until}[1]{\{1,\dots, #1\}}
\newcommand{\fromto}[2]{\{#1,\dots, #2\}}
\newcommand{\subscr}[2]{#1_{\textup{#2}}}
\newcommand{\setdef}[2]{\left\{#1 \, : \; #2\right\}}
\newcommand{\map}[3]{#1: #2 \rightarrow #3}
\newcommand{\union}{\operatorname{\cup}}
\newcommand{\intersection}{\ensuremath{\operatorname{\cap}}}
\newcommand{\real}{\mathbb{R}}
\theoremstyle{definition}
\newtheorem{example}{Example}
\newtheorem{remark}{Remark}
\newtheorem{theorem}{Theorem}
\newtheorem{definition}{Definition}
\newtheorem{lemma}[theorem]{Lemma}
\newtheorem{corollary}[theorem]{Corollary}
\newtheorem{proposition}[theorem]{Proposition}
\newcommand{\be}{\begin{equation}}
\newcommand{\ee}{\end{equation}}
\renewcommand{\l}{\left}
\renewcommand{\r}{\right}
\newcommand{\K}{\mathcal{K}} 	
\newcommand{\I}{\mathcal{I}}		
\newcommand{\N}{\mathcal{N}}	
\newcommand{\G}{\mathcal{G}}	
\newcommand{\E}{\mathcal{E}}	
\newcommand{\co}{\overline{\textup{co}}} 
\newcommand{\xave}{\subscr{x}{ave}}	
\newcommand{\xmin}{\subscr{x}{min}}	
\newcommand{\xmax}{\subscr{x}{max}}	
\newcommand{\card}[1]{{\left|{#1}\right|}}	
\newcommand{\car}{Carath\'eodory\ }		
\newcommand{\1}{\mathbf{1}}
\title{Continuous-time Discontinuous Equations in Bounded Confidence Opinion Dynamics}
\author{Francesca Ceragioli \and Paolo Frasca\thanks{F. Ceragioli and P. Frasca are with the Dipartimento di Matematica, Politecnico di Torino, Torino, Italy. The authors wish to thank J. Hendrickx for his comments about this work.}}
\date{}
\begin{document}

\maketitle

\begin{abstract}
This report studies a continuous-time version of the well-known Hegselmann-Krause model of opinion dynamics with bounded confidence. 
As the equations of this model have discontinuous right-hand side, we study their Krasovskii solutions.
We present results about existence and completeness of solutions, and asymptotical convergence to equilibria featuring a ``clusterization'' of opinions. The robustness of such equilibria to small perturbations is also studied.
\end{abstract}


\section{Introduction and Preliminaries}
In the study of social dynamics, which has recently attracted much interest from physicists, mathematicians and control theorists, modeling the interactions between individuals is the core issue. In many successful models of opinion evolution, interactions are assumed to take place only if the opinions of the interacting agents are close enough, say, closer than a certain threshold: such models are usually called ``bounded confidence'' models. In this work, we study a continuous-time version, introduced in~\cite{VDB-JMH-JNT:09a}, of the ``Hegselmann-Krause'' model. 

\subsection*{Problem Statement and Contribution}
Let us consider a population of $N$ agents, indexed in a set $\I=\until{N}$. Each of them has a time-dependent real-valued ``opinion'' $x_i(t)$,  which obeys the following dynamics
\be\label{eq:DHK}
\dot x_i=\sum_{j\in \I} s(x_j-x_i)(x_j-x_i), \qquad i\in\I,
\ee
where $s: \real\to \real$ is defined by
$$s(\tau)=\begin{cases} 1 & {\rm if}\   |\tau|<1\cr
0& {\rm if} \  |\tau|\geq 1. 
\end{cases}
$$
Notice that in the above model agent $j$ influences agent $i$ only if $|x_i-x_j|<1$, and that 
the function $s$, which encodes the coupling between the agent opinions and the bounded confidence assumption, is discontinuous.
Then, the system's right-hand side is discontinuous, and this requires to define solutions in suitable sense: in this paper, we use Krasovskii solutions, which are defined in the next section. Using Krasovskii solutions to solve a models of opinion dynamics with bounded confidence is the main novelty of our work.

On one hand, the advantage of using Krasovskii solutions is technical, as a complete existence and Lyapunov theory is available for such solutions, and this theory allows us to obtain interesting results without making the analysis cumbersome. Moreover the set of  Krasovskii solutions is ``large'', so that the results that we state for Krasovskii solutions also hold for other types of solutions as Filippov solutions and Carath\'eodory solutions, in case they exist.
On the other hand, the convexification of the discontinuities, which is the main feature of Krasovskii's definition, can also be seen as a refinement of the original model, which smooths out sharp decision thresholds. 

Besides the novelty in the solutions' definition, a distinctive feature of this report is our focus on the fundamental properties of the model, in particular (a) average preservation, (b) order preservation, (c) contractivity, (d) existence of Lyapunov functions, (e) robustness of equilibria with respect to small perturbations. In the literature about  networked systems, these properties often play an important role in coordination problems: in our work, they are instrumental to derive the significant results about solutions, namely existence, completeness,  and convergence to a clustered configuration. By clustered configuration we mean a state configuration such that, for every pair of agents, either their individual states coincide, or they differ by more than 1.

\subsection*{Related Works}
Recently, a vast literature (cf. the surveys~\cite{CC-SF-VL:09,JL:07}) has been produced about the evolution of opinions in social dynamics, and the interaction rules which shape such evolution.
Scholars have remarked that, in spite of significant social forces towards homogenization and consensus,
disagreement persists in societies. A convincing explanation for this phenomenon is based on 
``bounded confidence'': interactions are limited to be effective only between individuals whose opinions are already close enough.
A celebrated bounded confidence model is due to Hegselmann and Krause~\cite{RH-UK:02}. Similarly to most other models, the model consists of a discrete-time dynamical system:
discrete time allows for immediate computer simulations and other analysis advantages,
which are exploited for instance in~\cite{VDB-JMH-JNT:09}, but it entails the drawback of assuming synchrony among the opinions updates. 
For this reason, it seems worth to consider continuous-time bounded confidence models, in which a fixed time schedule is not required. This idea was already developed in~\cite{VDB-JMH-JNT:09a}, and indeed our work is strictly related to the one presented in~\cite[Section~2]{VDB-JMH-JNT:09a} and in the auxiliary report~\cite{VDB-JMH-JNT:09b}. In that pair of papers, the authors study the same bounded confidence model as here, but they assume a more restrictive (stronger) definition of solutions: this work provides an extension of their results. 
The relationship of our results with the mentioned pair of papers is further discussed in Section~\ref{sec:compare}.
%

Finally, we note that a conference version of this work has appeared in the Proceedings of the 18th World Congress of the International Federation of Automatic Control as~\cite{FC-PF:10} and that a related paper focusing on the role of the discontinuity of $s$ is going to appear as~\cite{FC-PF:11}.

\subsection*{Notation and Preliminaries}
\subsubsection*{Graphs} 
In this paper, we shall make use of some notions from graph theory, and in particular from algebraic graph theory.
Indeed, graph theory provides an effective tool to model interactions between agents and its use is becoming common both in engineering~\cite{FB-JC-SM:09,MM-ME:10} and in economics and social sciences~\cite{DE-JK:10}. 
A (weighted) graph $G$ is a triple $(V,E,A)$ where $V$ is a finite set
of vertices or nodes, $E\subset V\times V$ is a set of edges and the adjacency matrix $A$ is a matrix of weights, such that for any $u,v\in V$, $A_{uv}>0$ only if $(u,v)\in E$. The Laplacian matrix of $G$ is defined as $L_{uv}=-A_{uv}$ when $u\neq v$ and $L_{uu}=\sum_{v\in V}{A_{uv}}.$ If $(u,v)\in E$, then $v$ is said to be a neighbor of $u$ in $G$.
A path (of length $l$) from $u$ to $v$ in $G$ is an ordered list of edges $(e_1, \ldots, e_l)$ 
in the form $((u, w_1),(w_1,w_2), (w_2,w_3), \ldots, (w_{l-1},v)).$
Two nodes $u,v\in V$ are said to be connected if there exists a path
from $u$ to $v$, and disconnected otherwise. A graph is said to be
connected if every two nodes are connected, and disconnected
otherwise.  A graph is said to be symmetric when $(u,v)\in E$ implies
$(v,u)\in E$ and the matrix $A$ is symmetric. In a symmetric graph, being neighbors is an equivalence relation between nodes: the corresponding equivalence classes are said to be the connected components of the graph.

\subsubsection*{Solutions to ODEs}
As already remarked, in order to deal with possibly discontinuous
ODEs,  we need to take different notions of solutions
into consideration. 
We provide the definitions of classical, \car and Krasovskii solutions: the interested reader can find more background information in classical books as~\cite{AFF:88} or in the recent tutorial~\cite{JC:08-csm}. 
Let us consider the  differential equation
\be\label{eq:g}
\begin{cases}
\dot x=g(x)\\
x(t_0)=\bar x
\end{cases}
\ee
where $\map{x}{\real}{\real^N}$,  $\map{g}{\real ^N}{\real ^N}$, and $\bar x\in\real^N$.

A {\em classical solution} to  \eqref{eq:g} on an  interval $I\subset \real$ containing $t_0$, is a map $\map{\phi}{I}{\real^N}$ such that
\begin{enumerate}
\item $\phi$ is differentiable  on $I$,
\item $\phi (t_0)=\bar x$,
\item  $\dot{\phi}(t) =g(\phi(t) )$ for all  $t\in I$.
\end{enumerate}
 
A {\em Carath\'eodory solution} to \eqref{eq:g}  on an  interval $I\subset \real$ containing $t_0$, is a map $\map{\phi}{I}{\real^N}$ such that
\begin{enumerate}
\item $\phi$ is absolutely continuous on $I$,
\item $\phi (t_0)=\bar x$,
\item  $\dot{\phi}(t) =g(\phi (t))$ for almost every $t\in I$.
\end{enumerate}
 Equivalently, a Carath\'eodory solution to  \eqref{eq:g} is a solution to the integral equation
$$x(t)=\bar x+\int _{t_0}^{t}g(x(s))ds.  $$

A {\em Krasovskii solution} to \eqref{eq:g} on an  interval $I\subset \real$ containing $t_0$, is a map $\map{\phi}{I}{\real^N}$ such that
\begin{enumerate}
\item $\phi$ is absolutely continuous on $I$,
\item $\phi (t_0)=\bar x$, 
\item $\dot \phi(t)\in \K g(\phi(t))$ for almost every $t\in I$,
where $$\K g(x) =\bigcap_{\delta>0}\co(\setdef{g(y)}{y \text{ such
    that } \|x-y\|<\delta})$$ 
and given a set $A$, by $\co(A)$ we denote the closed convex hull of $A$.
\end{enumerate}

From the above definitions, it is clear that classical solutions are
\car solutions and, in turns, \car solutions are Krasovskii
solutions. Note also that \car solutions coincide with solutions in the classical sense when $g$ is continuous.

\section{Analysis and Results}
\label{sec:DHK}

This section contains our results about the dynamics~\eqref{eq:DHK}. After providing a graph-theoretical interpretation of the system, we prove several preliminary properties of Krasowskii solutions, including completeness and order preservation. Then, we take advantage of these results to prove convergence to clustered configurations. Finally, we define the robustness of clusters to small perturbations, and provide a necessary and sufficient condition.

\subsection{Interaction Graphs and Basic Properties}\label{sec:useful-graphs}
It is useful and suggestive to rewrite system~\eqref{eq:DHK} as a dynamic over a suitable state-dependent weighted graph, which represents the coupling between the opinions of different agents. 
In such a  graph the agents are the nodes, and the opinions of two agents  depend on each other whenever the agents are neighbors in the graph. 
By the way system~\eqref{eq:DHK} is defined, such interaction graph depends on the opinion states via the function $s$.
More precisely, for any $x\in \real ^N$ we define an interaction graph
$\G (x)=(\I, \E(x), A(x))$ where the edge set is
$$
\E(x)=\setdef{(i,j),\, i,j \in \I}{|x_i-x_j|<1},$$ that is,
$(i,j)\in \E(x)$ if and only if 
$s(x_j-x_i)>0$, 
and the adjacency matrix $A(x)$ is defined by
$$
A(x )_{ij}=\begin{cases}  s(x_i-x_j) & {\rm if}\  j\not =i\cr
0 &  {\rm if}\  j=i
\end{cases}
\qquad i,j\in \I,
$$ that is, $A(x)_{ij}=1$ if and only if $|x_i-x_j|<1$ and $j\neq i$.
The Laplacian matrix $L(x)$ associated to $\G (x)$ is then given by
$$
L(x )_{ij}=\begin{cases}  -s(x_i-x_j) & {\rm if}\  j\not =i\cr
\sum _{k\not = i}s(x_k-x_i) &  {\rm if}\  j=i
\end{cases}
\qquad i,j\in \I .
$$

In order to deal with the discontinuity, it is also useful to identify
``border'' configurations by the following definitions of border edge set 
$$\partial\E(x)=\setdef{(i,j),\, i,j \in \I}{|x_i-x_j|=1}.$$
and graph $\bar\G(x)=\left( \I, \bar\E(x),\bar A\right),$ with $\bar\E(x)=\E(x)\union\partial\E(x)$ and $\bar A(x)_{ij}=1$ if and only if $|x_i-x_j|\le1$ and $j\neq i$.

\begin{remark}[Symmetry and translation invariance]\label{rem:trans-invar}
We remark that the graphs $\G$ and $\bar \G$ are symmetric and invariant with respect to the translation $x+\alpha\1$, where $\alpha \in \real$ and $\1=(1,...,1)^T$, i.e.  
$\G(x)=\G( x+\alpha\1)$ and $\bar \G(x)=\bar \G( x+\alpha\1).$ \end{remark}

As we said, the graphs introduced above are interaction graphs in the
following sense: if two nodes are disconnected, they can not influence
each other opinions.

\smallskip
With the above notation, system~\eqref{eq:DHK} can be written as
\begin{equation}\label{eq:f(x)}
\dot x=-L(x)x,\end{equation}
being 
$L(x)$ the Laplacian matrix of the state-dependent graph $\G (x)$ and  
$$(-L(x)x)_i=\sum_{j\in \I} s(x_j-x_i)(x_j-x_i)$$
the components of the right-hand side.
As the differential equation~\eqref{eq:f(x)} has a discontinuous right-hand side, we consider Krasovskii solutions to~\eqref{eq:DHK}, which we characterize as follows.
For any $H\subset \partial\E(x)$ we let $L^H(x)$ be the Laplacian matrix associated to the graph $\G ^H (x)$ with edges  $\E (x)\union H$, and correspondingly 
$$(-L^H(x)x)_i=\sum_{j: (i,j)\in\E (x)\union H } (x_j(t)-x_i(t)).$$

By the definition, it is clear that a Krasovskii solution to~\eqref{eq:DHK} satisfies at almost every time the inclusion $$ \dot x \in \co(\setdef{-L^H(x)x}{H\subset \partial\E(x)}),$$
or equivalently the inclusion
\begin{align*}
 \dot x
 \in \Big\{
-\sum_{H\subset \partial\E(x)}\!\!  & \alpha_H L^H(x)x
 \;:\; 
\alpha_H\ge0, \: 
  \sum_{K\subset\partial\E(x)}\!\!\alpha_K=1
 \Big\}.
 \end{align*}
Namely, for a given Krasovskii solution $\phi(\cdot)$,
$$ \dot\phi(t) 
=-\sum_{H\subset \partial\E(\phi(t))} \alpha_H^\phi(t) L^H(\phi(t))\phi(t)
 \quad \text{ for almost every $t$},$$ 
where the time-dependent coefficients $\alpha_H^\phi$ depend on the
solution $\phi(\cdot)$ itself.

\bigskip
Using this graph-theoretical characterization, we now prove some basic properties of Krasovskii solutions to~\eqref{eq:DHK}.

\begin{proposition}[Basic properties of solutions]\label{prop:DHK}
Let $x(\cdot)$ be a Krasovskii solution to~\eqref{eq:DHK}, on its domain of definition. 
\begin{enumerate}[(i)]
\item\label{item:prop-disc:exist}{\rm (Existence).}  For any initial condition $\bar x\in \real^\I$, there exists a local Krasovskii solution to~\eqref{eq:DHK}.
\item\label{item:prop-disc:order}{\rm (Order preservation).}
For any $i,j\in\I,$ if $x_i(t_1)<x_j(t_1)$, then $x_i(t_2)<x_j(t_2)$, for any $t_2>t_1$.
\item\label{item:prop-disc:contractive}{\rm (Contractivity).}
For any $t_2>t_1,$ $\co(\{x_i(t_2)\}_{i\in\I})\subset\co(\{x_i(t_1)\}_{i\in\I})$.
\item\label{item:prop-disc:complete}{\rm (Completeness).} The solution $x (\cdot)$ is complete.
\item\label{item:prop-disc:average}{\rm (Average preservation).} Let $x_{\rm ave}(t)=N^{-1}\sum_{i=1}^{N} x_i(t)$. Then $x_{\rm ave}(t)=x_{\rm ave}(0)$, for $t>0$; 
\end{enumerate}
\end{proposition}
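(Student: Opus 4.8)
The plan is to prove the five properties in an order that lets later items build on earlier ones, since several of them are logically linked. I would tackle them roughly in the sequence: existence, then the geometric/order properties (order preservation, contractivity), then completeness, and finally average preservation. The key technical device throughout is the graph-theoretic characterization established just before the statement, namely that almost everywhere $\dot\phi(t) = -\sum_{H\subset\partial\E(\phi(t))} \alpha_H^\phi(t) L^H(\phi(t))\phi(t)$, so that the velocity is always a convex combination of Laplacian flows $-L^H(\phi)\phi$. Each such $L^H$ is a genuine graph Laplacian, hence has nonnegative off-diagonal-killing structure, row sums zero, and all the sign properties that drive consensus-type arguments.

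\textbf{Existence.} For item \eqref{item:prop-disc:exist} I would simply invoke the standard existence theory for Krasovskii (differential inclusion) solutions. The map $\K g$ is, by construction, upper semicontinuous with nonempty compact convex values, and $g(x)=-L(x)x$ is locally bounded (indeed $s$ is bounded and $L(x)x$ grows at most linearly). These are exactly the hypotheses guaranteeing a local absolutely continuous solution of the inclusion $\dot x\in\K g(x)$ through any initial point; I would cite the background references (\cite{AFF:88} or \cite{JC:08-csm}) rather than reprove it.

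\textbf{Order preservation and contractivity.} For \eqref{item:prop-disc:order}, fix $i,j$ with $x_i(t_1)<x_j(t_1)$ and consider the difference $d(t)=x_j(t)-x_i(t)$. Using the Laplacian form of each $-L^H(x)x$, I would compute $\dot d$ and show that whenever $d>0$ the dynamics cannot drive $d$ down to $0$: the crucial point is that when $x_i=x_j$ the two agents have identical neighborhoods in every graph $\G^H$, so the $i$-th and $j$-th components of $-L^H(x)x$ coincide and $\dot d=0$ at contact. A comparison/differential-inequality argument (or a Gr\"onwall-type bound of the form $\dot d \ge -c\,d$ for a local Lipschitz-type constant $c$) then prevents order from being violated, giving strict order preservation for all $t_2>t_1$. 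Item \eqref{item:prop-disc:contractive} follows from the same circle of ideas applied to the extreme agents: I would let $\xmax(t)=\max_i x_i(t)$ and $\xmin(t)=\min_i x_i(t)$, note these are absolutely continuous, and show $\dot{\xmax}\le0$ and $\dot{\xmin}\ge0$ a.e.\ because the maximal agent is pulled only toward smaller values (every active neighbor $j$ has $x_j\le\xmax$, so each term $x_j-\xmax\le0$, and convex combinations preserve the sign). This shrinks the interval $\co(\{x_i(t)\})=[\xmin(t),\xmax(t)]$, which is precisely the claimed inclusion.

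\textbf{Completeness and average preservation.} For \eqref{item:prop-disc:complete}, contractivity already confines any solution to the bounded box $[\xmin(0),\xmax(0)]^N$; since the right-hand side is bounded on this compact set, the solution cannot blow up or escape in finite time, so it extends to all $t>0$. For \eqref{item:prop-disc:average}, I would use the fundamental Laplacian identity $\1^T L^H(x)=0$ (row sums vanish for every graph Laplacian), which gives $\1^T\dot\phi(t)=-\sum_H\alpha_H^\phi(t)\,\1^T L^H(\phi)\phi=0$ almost everywhere; integrating shows $\sum_i x_i(t)$ is constant, hence $\xave$ is conserved. I expect the \emph{main obstacle} to be item \eqref{item:prop-disc:order}: because the right-hand side is a solution-dependent convex combination rather than a fixed vector field, and because the active edge set can change exactly on the measure-zero contact set where order might be threatened, the argument must handle the discontinuity carefully—showing that the convexification introduced by the Krasovskii operator at the boundary $|x_i-x_j|=1$ does not open a loophole allowing two ordered agents to cross. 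Making the contact analysis rigorous (via the symmetry of neighborhoods at coincidence, which relies on Remark~\ref{rem:trans-invar}) is where the real care is needed.
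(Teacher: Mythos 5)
Your plan for items (i), (iii) and (iv) matches the paper's proof (local existence from local boundedness of the right-hand side; contractivity by showing the extremal agents move inward, since every active term $x_j-\xmax\le 0$ and convex combinations preserve the sign; completeness from boundedness of solutions). The genuine gap is in item (ii), whose ``crucial point'' is false. At a contact $x_i=x_j$ it is true that $\N_i(x)=\N_j(x)$, but the border sets need not agree: $H$ ranges over subsets of $\partial\E(x)$, and $H$ may contain the edge $(i,k)$ without containing $(j,k)$. Concretely, take $x_i=x_j=0$, $x_k=1$ and $H=\{(i,k),(k,i)\}$: then $(-L^H(x)x)_i=1$ while $(-L^H(x)x)_j=0$. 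So the $i$-th and $j$-th components of $-L^H(x)x$ do \emph{not} coincide at coincidence, and $\dot d=0$ at contact is not guaranteed. This is precisely the phenomenon the paper emphasizes in its own proof of item (iii): derivatives of coincident agents satisfy the same inclusion but need not be equal, which is why Krasovskii solutions preserve strict inequalities but not equalities; Remark~\ref{rem:trans-invar} on translation invariance has no bearing on this. Moreover, even if $\dot d=0$ at contact did hold, that pointwise statement would not exclude contact in finite time (consider $\dot d=-\sqrt{d}$). The only workable route is the differential inequality $\dot d\ge -c\,d$, which you mention only parenthetically and do not justify; proving it is the actual content of the paper's argument. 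One assumes, by continuity and without loss of generality, $0<x_j-x_i<1$, splits each neighbor sum into the common parts ($\N_i\intersection\N_j$ and the cross terms with $N^H_i$, $N^H_j$), which contribute multiples of $-(x_j-x_i)$, and the asymmetric parts, and then checks signs: any $h$ counted for $i$ but not for $j$ satisfies $x_h<x_i$, so it contributes positively to $\frac{d}{dt}(x_j-x_i)$, and symmetrically for $j$. Discarding these favorable terms gives $\frac{d}{dt}(x_j-x_i)\ge -\big(\card{\N_i\intersection\N_j}+\card{\N_i}+\card{\N_j}\big)(x_j-x_i)$, and Gr\"onwall concludes. Without this sign analysis, item (ii) is unproven.

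There is also a slip in item (v). You justify $\1^T L^H(x)=0$ by saying ``row sums vanish for every graph Laplacian''. Row sums vanishing is $L^H(x)\1=0$, which indeed holds for every Laplacian but only expresses invariance under translation; average preservation needs the \emph{column} sums to vanish, i.e.\ $\1^T L^H(x)=0$, and this holds only for balanced graphs. For a genuinely directed $H$ (say $(i,k)\in H$ but $(k,i)\notin H$) the identity fails. So your computation needs the key remark of the paper's proof: the graphs $\G^H(x)$ occurring in the convexification can be taken symmetric, because a nearby state $y$ activates the border edge $(i,k)$ if and only if it activates $(k,i)$; only with this symmetry does $\1^T L^H=0$, and hence $\sum_i \dot x_i=0$ almost everywhere, follow.
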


\begin{proof}
In the proof, the following notation will be useful.
For every $i\in\I$, and every $x\in \real^N$, we let $$\N_i(x):=\setdef{k\in\I}{|x_i-x_k|<1},$$
and for any $H\subset \partial\E(x)$, we let 
$$N^{H}_i=\setdef{k\in\I}{(i,k)\in H}.$$
Clearly,  $N^{H}_i \subset \partial \N_i(x):=\setdef{k\in\I}{|x_i-x_k|=1}.$
With this notation, $$(-L(x)x)_i=\sum_{j\in \N_i(x)} (x_j(t)-x_i(t))$$
and 
$$(-L^H(x)x)_i=\sum_{j\in \N_i(x) \union N^H_i} (x_j(t)-x_i(t)).$$
We are now ready to prove our statements.
\begin{enumerate}[i)]
\item Since the right-hand side of~\eqref{eq:DHK} is locally essentially bounded, local existence of a Krasovskii solution is guaranteed (see for instance~\cite{OH:79}). 
\item 
To prove the claim, we study the dynamics of the difference between $x_j$ and $x_i$. By continuity of the solutions, we can assume with no loss of generality that $x_j$ and $x_i$ are close, for instance that $x_j-x_i<1$. For brevity, in the following we omit the explicit dependence of $\N_i$ on $x$. 
For almost every time $t$, we have 
\begin{align*}
\frac{d}{dt}& (x_j-x_i)
= 
\sum_{H\in\partial\E(x)} \alpha_H \left[
\sum_{h\in\N_j\union N^H_j}(x_h-x_j)- \sum_{h\in\N_i\union N^H_i}(x_h-x_i) \right]\\
=&
\sum_{H\in\partial\E(x)} \alpha_H \left[
\sum_{h\in\N_i\intersection\N_j}((x_h-x_j)-(x_h-x_i)) 
\right. 
+ \sum_{h\in\N_i\intersection N_j^H}((x_h-x_j)-(x_h-x_i))
 \\ 
&\qquad \qquad  +
\sum_{h \in N_i^H\intersection\N_j}((x_h-x_j)-(x_h-x_i))
- \sum_{h\in(\N_i\union N^H_i)\setminus(\N_j\union N^H_j)} (x_h-x_i)
  \\
&\qquad \qquad  
+ \left. \sum_{h\in(\N_j\union N^H_j)\setminus(\N_i\union N^H_i)} (x_h-x_j)
\right]\\ 
=&
\sum_{H\in\partial\E(x)} \alpha_H 
\left[
\sum_{h\in\N_i\intersection\N_j}-(x_j-x_i) +
\sum_{h\in\N_i\intersection N_j^H} (-x_j+x_i) 
\right.\left. 
+\sum_{h \in N_i^H\intersection\N_j}(-x_j+x_i)
\right.\\ 
&\qquad \qquad\left.
- \sum_{h\in(\N_i\union N^H_i)\setminus(\N_j\union N^H_j)} (x_h-x_i)
\right. \left. 
+ \sum_{h\in(\N_j\union N^H_j)\setminus(\N_i\union N^H_i)} (x_h-x_j)
\right]\\
=&
-\card{\N_i\intersection\N_j} (x_j-x_i)  
+ \sum_{H\subset \partial\E(x)} \alpha_H
\Big[
- ( \card{\N_i\intersection N_j^H} +\card{N_i^H\intersection\N_j})
 (x_j-x_i)\\
& \qquad \qquad - \sum_{h\in(\N_i\union N^H_i)\setminus(\N_j\union N^H_j)} (x_h-x_i)
+ \sum_{h\in(\N_j\union N^H_j)\setminus(\N_i\union N^H_i)} (x_h-x_j)
\Big] .
\end{align*}
Since if $h\in (\N_i\union N^H_i)\setminus(N_j^H\union \N_j)$, then $x_h-x_i<0$, whereas if $h\in (\N_j\union N^H_j)\setminus(N_i^H\union \N_i)$, then $x_h-x_j>0$, and since $\card{\N_i\intersection N_j^H}\le \card{\N_i},$
we get that 
$$
\frac{d}{dt} (x_j-x_i)\ge - \big(\card{\N_i\intersection\N_j} + \card{\N_i} +\card{\N_j}\big) (x_j-x_i).
$$

The obtained inequality ensures that $x_j-x_i$ can not reach zero in finite time, and yields our claim.

\item 
To prove the claim we show that the leftmost agent can only move to its right. 
To this goal, we need a recall the proof of statement~(\ref{item:prop-disc:order}). While our argument shows that strict inequalities between agents' states are preserved by the dynamics, we have to remark that equalities are not.
It is not in general true that if $x_i(t_1)=x_j(t_1)$, then $x_i(t_2)=x_j(t_2)$ for any $t_2>t_1.$ Indeed, we can observe that if $x_i(t_1)=x_j(t_1)$, then $\dot x_i(t_1)$ and $\dot x_j(t_1)$ have to satisfy to the same differential inclusion, but need not to be equal.
However, it can be proven that it is always possible, given a solution $x(\cdot)$, to sort the states so that $x_{i_1}(t)\le x_{i_2}(t)\le \ldots \le x_{i_N}(t),$ for every $t$. Note that this mapping$\,\map{i_{(\cdot)}}{\until{N}}{\I}$ depends on the solution and needs not to be unique. Nevertheless, it allows us to define $\xmin(t):=x_{i_1}(t)$, and $\xmax(t):=x_{i_N}(t)$. This fact is useful because it allows us to observe that $$x_i(t)-\xmin(t)\ge0$$ for every $t$ and every $i\in\I$ and then, for almost every time $t$,
$\frac{d}{dt}{\xmin}(t)\in[0,+\infty).$ Repeating an analogous argument for $\xmax$ implies the claim.

\item 
Claim~(\ref{item:prop-disc:contractive}) ensures that solutions are bounded. By standard arguments, this is enough to guarantee that local solutions can be extended for all $t>0$.

\item  For every  $x\in \real^N$, every $H\subset\partial\E(x)$ and every $i,j\in\I$, it holds that $j\in\N_i(x)\union N_i^H$ if and only if $i\in\N_j(x)\union N_j^H$; that is, the graph  $\G ^H (x)$ is symmetric. This key remark allows us to argue that for almost every time $t$,
\begin{align*}
\frac d{dt} \xave(t)=&N^{-1}\sum_{i\in\I}\dot x_i(t)\\=&
N^{-1}\sum_{i\in\I}\sum_{H\subset \partial \E(x(t))} \alpha_H(t)\sum_{j\in\N_i(x) \union N^H_i} (x_j(t)-x_i(t))\\
=&N^{-1}\sum_{H\subset \partial \E(x(t))} \alpha_H(t)\sum_{i\in\I}\sum_{j\in\N_i(x) \union N^H_i} (x_j(t)-x_i(t))
=\,0.
\end{align*}
This ensures $\xave(t)=\xave(0)$ for every $t>0.$
\end{enumerate}
\end{proof}

\subsection{Convergence}
We are now ready to prove convergence to a configuration in which agents are separated into clusters of agents which share the same opinion.
We first recall that a point $\tilde x$ is said to be a Krasovskii
equilibrium of \eqref{eq:DHK} if the function $x(t)\equiv \tilde x$ is
a Krasovskii solution to \eqref{eq:DHK}, i.e. $0\in
\co(\setdef{-L^H(\tilde x)\tilde x}{H\subset \partial\E(\tilde x)})$.

\begin{theorem}[Convergence of DHK]\label{th:DHK-2}
The set of Krasovskii equilibria of \eqref{eq:DHK} is 
\begin{align*}
F=\Big\{x\in\real^N\;:\; &\text{for every }  (i,j)\in\I\times\I, \text{ either } x_i=x_j \text{ or } |x_i-x_j|\ge 1\Big\}\end{align*}
and if $x(\cdot)$ is a Krasovskii solution to~\eqref{eq:DHK}, then
$x(t)$ converges to a point $x_*\in F$ as $t\to +\infty$. 
\end{theorem}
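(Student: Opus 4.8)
The plan is to establish the two assertions separately: first that the set of Krasovskii equilibria is exactly $F$, and then that every solution converges to a single point of $F$.

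For the identification of $F$, the inclusion of $F$ into the equilibrium set is immediate. If $\tilde x\in F$ and $i\in\I$, then every $j\in\N_i(\tilde x)$ satisfies $|\tilde x_i-\tilde x_j|<1$ and hence $\tilde x_j=\tilde x_i$, so that $(-L(\tilde x)\tilde x)_i=\sum_{j\in\N_i(\tilde x)}(\tilde x_j-\tilde x_i)=0$; thus the choice $H=\emptyset$ already yields $0\in\co(\setdef{-L^H(\tilde x)\tilde x}{H\subset\partial\E(\tilde x)})$. For the converse I would argue by a ``peeling from the top'' induction on $N$. Let $\tilde x$ be an equilibrium, written as $0=\sum_H\alpha_H(-L^H(\tilde x)\tilde x)$ with $\alpha_H\ge0$ summing to $1$, and let $P=\setdef{i\in\I}{\tilde x_i=\max_k\tilde x_k}$. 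For $i\in P$ every summand of $(-L^H(\tilde x)\tilde x)_i=\sum_{j\in\N_i(\tilde x)\union N_i^H}(\tilde x_j-\tilde x_i)$ is $\le 0$, so the $i$-th component of the equilibrium identity forces $\tilde x_j=\tilde x_i$ for every $j\in\N_i(\tilde x)$ and every active $H$; hence no agent outside $P$ lies within distance $<1$ of $P$, and no active $H$ carries an edge joining $P$ to its complement. Consequently the restriction of the coefficients $\alpha_H$ certifies that $\tilde x$ restricted to $\I\setminus P$ is an equilibrium of the $(N-\card{P})$-agent system, which by the inductive hypothesis lies in its clustered set; combined with the separation of $P$ by at least $1$, this gives $\tilde x\in F$.

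For convergence I would first exploit the gradient structure of \eqref{eq:f(x)}. Setting $U(\tau)=\tfrac12\tau^2$ for $|\tau|\le1$ and $U(\tau)=\tfrac12$ for $|\tau|\ge1$, the locally Lipschitz function $W(x)=\tfrac12\sum_{i,j\in\I}U(x_i-x_j)$ is a potential for \eqref{eq:f(x)}, since $U'(\tau)=s(\tau)\tau$. Using the nonsmooth chain rule along absolutely continuous curves together with the Krasovskii inclusion, I would check that $W$ is nonincreasing along every solution and strictly decreasing off $F$; the convexification in $\K$ is harmless here because $U$ is continuous across $|\tau|=1$. By contractivity every solution is bounded and by completeness it is defined for all $t\ge0$, so its $\omega$-limit set $\Omega$ is nonempty, compact, connected and invariant, with $W$ constant on $\Omega$; a LaSalle invariance principle for differential inclusions then gives $\dot x=0$ on $\Omega$, that is $\Omega\subset F$ by the characterization above. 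To upgrade convergence to the set $F$ into convergence to a point, I would use the monotonicity that a gap exceeding $1$ between order-consecutive agents can never shrink back: in the sorting $x_{i_1}\le\cdots\le x_{i_N}$ of the order-preservation proof, if $x_{i_{k+1}}-x_{i_k}>1$ then $i_k$ has no strict or border neighbour to its right and $i_{k+1}$ none to its left, whence $\dot x_{i_k}\le 0\le\dot x_{i_{k+1}}$ and the gap is nondecreasing. Since there are at most $N-1$ such gaps, the partition of $\I$ into maximal runs separated by gaps $>1$ stabilizes after a finite time $T$; for $t\ge T$ distinct blocks neither strictly nor ``border''-interact, so each block evolves as an autonomous copy of \eqref{eq:DHK} on fewer agents. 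By induction on $N$ (assembling the block limits, which lie in the respective clustered sets and are separated by more than $1$) one reduces to the case of a single persistent block, for which conservation of the block average together with the strict decrease of $W$ and the contractivity of its diameter pins the limit down to the consensus value.

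The main obstacle I anticipate is the careful bookkeeping of the border edges $\partial\E$ at configurations where two agents sit at distance exactly $1$: these are precisely the points where the Krasovskii map is genuinely multivalued, and they must be controlled both in the equilibrium characterization, to show that no active selection couples the top cluster $P$ to its complement, and in the freezing argument, to ensure that blocks separated by a gap strictly larger than $1$ truly decouple. The second delicate point is the passage from set-convergence to point-convergence in the single-block case: because coincident agents need not remain coincident, one cannot simply track merged clusters, and the argument must instead rule out asymptotic formation of a nontrivial chain with internal gaps accumulating at $1$, relying on the attractiveness of interactions at distances below $1$ together with the conserved block average.
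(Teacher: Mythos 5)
Your first two steps are sound. The ``peeling from the top'' induction for the equilibrium set is essentially the paper's own argument (the paper sorts the components and peels from the bottom instead), and your gradient potential $W(x)=\tfrac12\sum_{i,j}U(x_i-x_j)$ is a legitimate alternative to the paper's smooth Lyapunov function $V(x)=\tfrac12\sum_{i}x_i^2$: both are fed into the same LaSalle invariance principle for Krasovskii solutions, and your observation that the Krasovskii hull $\K(-L(x)x)$ is contained in minus the Clarke gradient of $W$ (with $-W$ regular, so the nonsmooth chain rule applies) can indeed be made rigorous.

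The genuine gap is in the last step, exactly at the point you yourself flag as delicate. The base case of your induction --- that a ``single persistent block'', i.e.\ a solution along which no sorted consecutive gap ever exceeds $1$, converges to consensus at the block average --- is false. The simplest counterexample is $N=2$ with $\bar x=(0,1)$: this point lies in $F$, so the constant map is a Krasovskii solution; it is a single block under your definition (the gap never exceeds $1$), yet its limit is $(0,1)$, not the consensus $(\tfrac12,\tfrac12)$. More generally, $F$ contains points whose consecutive gaps are \emph{exactly} $1$, and solutions can converge to such points without any gap ever exceeding $1$: this is precisely the case $T=\infty$ treated in the paper's proof (gap tending to $1^-$), and sliding-mode solutions as in Example~\ref{ex:sliding-mode} spend positive time on such border configurations. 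So the behaviour you propose to ``rule out'' --- limits with internal gaps equal to $1$ --- genuinely occurs for Krasovskii solutions and cannot be excluded; note also that redefining blocks via gaps $\ge 1$ does not repair the argument, because blocks at distance exactly $1$ can still interact through Krasovskii border edges, destroying the claimed autonomy of the blocks. The paper avoids this trap by never attempting to prove consensus inside a block: it writes $F$ as a finite union of closed, pairwise disjoint sets $F_P$ indexed by partitions $P$ of $\I$, so that the connected $\omega$-limit set lies in a single $F_P$; once $P$ is fixed, the limit \emph{point} is identified by average preservation --- per-group averages after a finite disconnection time, or, if the groups never disconnect in finite time, the global average together with the fact that the limiting inter-cluster gaps are then forced to equal exactly $1$, which pins the cluster values down (e.g.\ $\xave(0)-\tfrac{N-a}{N}$ and $\xave(0)+\tfrac{a}{N}$ for two clusters of sizes $a$ and $N-a$). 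Your base case needs to be replaced by an argument of this type.
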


\begin{proof}
The proof is in three steps: we first describe the set of equilibria, then prove convergence to this set, and finally prove convergence to one equilibrium.
\begin{enumerate}[i)]
\item 
It is clear that every point in $F$ is an equilibrium. To prove that
there are no other equilibria, we proceed as follows. Without loss of generality we can sort the components of $\tilde x$ so that $\tilde x_{i_1}\leq ...\leq \tilde x_{i_N}$.
For a vector $v\in\co(\setdef{-L^H(\tilde x)\tilde x}{H\subset\partial\E(\tilde x)})$ to be equal to zero, it is necessary that $v_{i_1}=0$. But since $\tilde x_k-\tilde x_{i_1}\ge0$ for every $k\in\I$, it is necessary that $\tilde x_j-\tilde x_{i_1}\in \{0\}\union [1,+\infty),$ for every $j\in \I.$ Repeating this reasoning for $i_2,\ldots$, we have that the set of equilibria actually coincides with $F$.

\item We define the Lyapunov function 
$V(x)=\frac12\sum_{i\in\I}x_i^2$ and compute, using the symmetry of the graph $\G(x)$ as done in~\cite{CC-FF-PT:08},
\begin{align*}
\frac{d}{dt} V(x(t))=&   \sum_{i\in \I} x_i(t) \dot x_i(t)\\
= &  \sum_{i\in \I} x_i(t) \left( \sum_{j\in \N_i(x)} (x_j(t)-x_i(t))+
\sum_{H\subset\partial\E(x)} \alpha_H \sum_{j\in N_i^H} (x_j(t)-x_i(t))\right)\\
=&- \frac12\sum_{(i,j)\in\E(x)} (x_j(t)-x_i(t))^2 - \frac12
\sum_{H\subset\partial\E(x)} \alpha_H \sum_{(i,j)\in H}(x_j(t)-x_i(t))^2 \le 0.
\end{align*}
Since  the inequality is strict if $x(t)\not\in F$, and $F$ is closed and weakly invariant, we can apply a LaSalle invariance principle~\cite[Theorem~3]{AB-FC:99} to conclude convergence to the set~$F$.

\item We observe that the set $F$ is the union of a finite number of sets $F_P$, where $P=\fromto{P_1}{P_k}$ is a partition of $\I$ in $1\le k\le N$ subsets, and 
$$F_P=\setdef{x\in\real^N}{\forall\, i,j\in \I, \text{ if $\exists\, h$ s.t. }  i,j \in P_h, \text{then } x_i=x_j, \text{else } |x_i-x_j|\ge1 }.$$
As the sets $F_P\subset F$ are closed and disjoint, each solution converges towards one of them. 
Without loss of generality, we relabel the states so that, for the solution at hand, $x_1(t)\le \ldots \le x_N(t)$ for every $t\ge0$.
%
When $k=1$, the only partition is the trivial one, corresponding to equilibria in which the states of all agents coincide. In this case, average preservation implies that $x_i(t)\to \xave(0)$ for all $i$ as $t\to\infty.$
When $k=2$, there exists $a\in \until{N}$ such that for every $x\in F_P$ it holds that $x_i=x_a$ for every $i\le a$,  $x_i=x_{a+1}$ for every $i> a$, and $x_{a+1}-x_{a}\ge1$. 
Let $T_a=\inf\setdef{t\ge0}{x_{a+1}(t)-x_a(t)>1}.$ If $T_a<+\infty$, then there is disconnection at finite time and $x_i(t)\to \frac1a \sum_{j\le a} x_j(T)$ if $i\le a$ whereas $x_i(t)\to \frac1{N-a} \sum_{j>a} x_j(T)$ otherwise.
If instead $T=\infty$, then $x_{a+1}(t)-x_a(t)\to 1^-$ as $t\to+\infty$. By the average preservation, we argue that  $x_i(t)\to \xave(0)- \frac{N-a}N$ if $i\le a$ and $x_i(t)\to \xave(0)+ \frac{a}N$ otherwise.
As the argument can be extended to $k\ge3$ by defining $k-1$ appropriate disconnection times, we conclude that every solution converges to a point in $F$.
\qedhere
\end{enumerate}

\end{proof}

The set of equilibria $F$ in Theorem~\ref{th:DHK-2} has the
following feature: its points are such that the agent opinions either
coincide or their distance is larger than 1. Equivalently, the opinions of two agents are equal if and only if they are connected in the limit interaction graph. Following the opinion dynamics literature, we refer to such groups of agents as {\em clusters}, and to the corresponding values as {\em cluster values} or {\em cluster points}.
More formally, one can consider for a given $x\in F_n$, the map $\I\ni i\mapsto x_i\in \real$: the image of such map consists of the cluster values, and the clusters are the preimages of the cluster values. The {\em size} of a cluster is its cardinality.

\begin{remark}(Weak and strong equilibria)
According to the definition of Krasovskii equilibrium, Krasovskii
solutions which have Krasovskii equilibria as initial conditions may
leave the equilibria. 
For example, if $N=2$, $\overline x=(1,0)\in F$, there
  are two Krasovskii solutions issuing from $\overline x$:
  $x^1(t)\equiv (1,0)$ and $x^2(t)=(1/2+1/2 e^{-2t}, 1/2-1/2
  e^{-2t})$. In other words, the set~$F$ is weakly invariant but not strongly
  invariant. A subset of $F$ which is strongly invariant is  
$\mathring{F}=\Big\{x\in\real^N\;:\; \text{for every }  (i,j)\in\I\times\I, \text{ either  $x_i=x_j$ or }  |x_i-x_j|> 1\Big\}$. 
\end{remark}

\subsection{Robustness}
In~\cite{VDB-JMH-JNT:09a}, motivated by explaining simulation results about~\eqref{eq:DHK}, the authors provide a robustness analysis for the equilibria of a 
suitable weighted version of the model.
Inspired by this approach, we propose a similar definition of robustness for the equilibria of the original system~\eqref{eq:DHK}.
Loosely speaking, an equilibrium is said to be robust if the perturbation due to adding one agent is not able to make two clusters merge.
Equivalently, an equilibrium is said to be robust if, after adding one agent, there is no solution whose limit equilibrium point has a smaller number of clusters. A more formal definition can be given as follows.
\begin{definition}[Robust Equilibrium]
Let $x^*\in F$ and $x_0\in \real$, and consider a Krasovskii solution to~\eqref{eq:DHK}, $\tilde x(\cdot)$, such that $\tilde x(0)=\bar x=(x_1^*,\ldots x_N^*, x_0)$. Let $\bar{\bar x}=\lim_{t\to\infty}\tilde x(t)$. 

If, for any $x_0 \in \real$ and any complete Krasovskii solution, the number of clusters of $\bar {\bar x}$ is not smaller than the number of clusters in $x^*\!$, then $x^*$ is said to be robust.
\end{definition}

The following is our main  robustness result.
\begin{theorem}[Robustness conditions]\label{th:DHK-3}
Let $x^*\in F$ and, when considering any pair of subsequent clusters in $x^*$, denote their values as $x_A$ and $x_B$ and their sizes as $n_A$ and $n_B$, with $n_A\le n_B.$
The equilibrium $x^*$ is robust if and only if, for every pair of clusters,
either $n_A=n_B$ and $|x_A-x_B|> 2$,
or $n_A<n_B$
and 
$$|x_A-x_B|> \l(1+\frac{n_A}{n_B}\r)\l(1+\frac{1}{n_A+n_B}\r) e^{-t_{n_A,n_B}^*},$$
where the negative number $t_{n_A,n_B}^*$ is such that  
\be\label{eq:def-tab}\frac{1}{n_A+n_B} e^{-(n_A+n_B+1)t_{n_A,n_B}^*}+ \frac{n_A}{n_B}\l(1+\frac{1}{n_A+n_B}\r)e^{-t_{n_A,n_B}^*}=1.\ee
\end{theorem}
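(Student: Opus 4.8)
The plan is to reduce the robustness question to a three‑body problem and to locate the critical separation by a tangency condition. First I would argue that, with a single added agent, the cluster count can only decrease by merging one pair of \emph{adjacent} clusters. By order preservation and contractivity (Proposition~\ref{prop:DHK}), the perturbed trajectory $\tilde x(\cdot)$ keeps the extra agent between two consecutive cluster values, while clusters farther than $1$ away are never reached and stay put; and within each of the two affected clusters every agent feels the same external force, so by the symmetry of $\G^H$ the cluster stays coincident and may be treated as a single point mass. Hence it suffices to analyze, for each pair of subsequent clusters, the reduced system of the two centers $x_A(t)\le x_B(t)$ (weights $n_A\le n_B$) and the added agent $y(t)$, and to decide whether some placement $x_0$ and some Krasovskii solution can drive $x_A$ and $x_B$ to a common value. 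Merging is only possible while the agent \emph{bridges} both clusters, since a direct edge $A$–$B$ requires $x_B-x_A<1$, which (once reached) guarantees coalescence by Theorem~\ref{th:DHK-2}.

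Next I would write the bridge dynamics. As long as the agent is connected to both centers (but $A,B$ are not directly connected), the gaps $g_1=y-x_A$ and $g_2=x_B-y$ obey the linear system with matrix $\bigl(\begin{smallmatrix}-(n_A+1)&n_B\\ n_A&-(n_B+1)\end{smallmatrix}\bigr)$, whose eigenvalues are $-1$ and $-(n_A+n_B+1)$ — exactly the two rates appearing in~\eqref{eq:def-tab}. The slow eigenvector is the symmetric mode, so the inter‑cluster distance satisfies $g_1(t)+g_2(t)=(x_B-x_A)(t)=|x_A-x_B|\,e^{-t}$, decaying at rate $1$, while the fast mode $(1,-1)e^{-(n_A+n_B+1)t}$ only redistributes the agent between the two clusters. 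The coefficient of the fast mode is fixed by the placement $x_0$, and this is the single degree of freedom the adversary controls.

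I would then characterize merging as: the bridge persists ($g_1,g_2\le 1$) until $g_1+g_2=|x_A-x_B|e^{-t}$ drops to $1$, after which a direct edge forms and coalescence is automatic. Since the agent is pulled toward the heavier cluster $B$, the binding constraint is the gap $g_1$ to the \emph{smaller} cluster $A$. Placing the agent at distance exactly $1$ from $B$ (the extremal admissible position, realized via the Krasovskii convexification at the boundary $|y-x_B|=1$) minimizes $g_1(0)=|x_A-x_B|-1$; thereafter $g_1$ drifts up to a maximum and back to $0$. The critical separation is where this maximum just touches $1$: imposing $g_1(t^*)=1$ together with $\dot g_1(t^*)=0$ gives $|x_A-x_B|=\tfrac{n_A+n_B+1}{n_B}e^{-t^*}=\bigl(1+\tfrac{n_A}{n_B}\bigr)\bigl(1+\tfrac{1}{n_A+n_B}\bigr)e^{-t^*}$, and substituting the boundary condition $g_2(0)=1$ back in yields precisely the transcendental equation~\eqref{eq:def-tab} for $t^*=t^*_{n_A,n_B}$. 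Evaluating that equation at $t^*=0$ gives the value $(1+n_A)/n_B$: this is $<1$ exactly when $n_B>n_A$ (forcing an interior peak, $t^*<0$, and the stated formula), while for $n_A=n_B$ the value exceeds $1$, the peak is non‑interior, the binding constraint is at $t=0$, and the threshold collapses to $1+n_A/n_B=2$ — recovering the two cases of the statement. Robustness of $x^*$ is then equivalent to $|x_A-x_B|$ exceeding this threshold for every adjacent pair.

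The hard part will be Step~1: rigorously justifying that the extremal scenario is this single‑pair, rigid‑point‑mass bridge. Concretely, I must show that no added agent can trigger a cascade (one merger enabling a second) or merge non‑adjacent clusters, that each affected cluster indeed stays coincident throughout (so the point‑mass reduction is exact rather than approximate), and that the boundary placements and the extremal bridging solution are genuinely achievable \emph{Krasovskii} solutions via an admissible choice of the coefficients $\alpha_H$. The subsequent optimization over the placement and the verification that the interior tangency is feasible precisely when $n_A<n_B$ are then elementary calculus once the reduced linear model is in place.
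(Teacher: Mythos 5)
Your proposal is correct and follows essentially the same route as the paper's appendix: reduce to the two-gap planar system whose matrix has eigenvalues $-1$ and $-(n_A+n_B+1)$, observe that escape from the unit square is only possible through the boundary $\{g_1=1\}$ (the gap to the smaller cluster, and only when the other gap exceeds $(n_A+1)/n_B$), and pin down the threshold via the critical trajectory tangent to $\{g_1=1\}$ launched from a placement at distance exactly $1$ from $B$ --- which is precisely the paper's solution through $\l(1,\frac{n_A+1}{n_B}\r)$ terminating on $\{y=1\}$, yielding~\eqref{eq:def-tab} and the stated bound (your sum-decay identity $g_1+g_2 = |x_A-x_B|e^{-t}$ is the paper's observation that $\tilde x+\tilde y$ decays at rate $1$, though note it holds because $(1,1)$ is a \emph{left} eigenvector; the slow right eigenvector is $(n_B,n_A)$, not the symmetric one). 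The ``hard part'' you defer --- justifying the rigid point-mass reduction even though Krasovskii solutions need not preserve equalities --- is exactly what the paper handles by first assuming equality preservation within clusters and then removing that assumption through the two boundary cases $x_0=x_A+1$ and $x_0=x_B-1$, where it shows $\dot x_0$ strictly exceeds the admissible cluster velocities so the solution leaves the discontinuity instantly.
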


The proof of this result is postponed to the Appendix: here we briefly discuss its meaning for large populations of agents. 

\begin{corollary}[Robustness in large populations]
As $n_A\to \infty$ the necessary and sufficient condition in Theorem~\ref{th:DHK-3} degenerates into 
$$|x_A-x_B|> 1+\frac{n_A}{n_B}$$
for any pair of clusters $A$ and $B$.
\end{corollary}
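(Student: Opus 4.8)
The plan is to analyze the implicit relation~\eqref{eq:def-tab} that defines $t^*_{n_A,n_B}$ in the regime $n_A\to\infty$. Write $N=n_A+n_B$ and observe that, since $n_B\ge n_A$, letting $n_A\to\infty$ forces $N\to\infty$ as well. The robustness threshold for a pair of clusters with $n_A<n_B$ is
$$\left(1+\frac{n_A}{n_B}\right)\left(1+\frac{1}{N}\right)e^{-t^*_{n_A,n_B}},$$
so proving the corollary amounts to showing that the correction factor $\left(1+\frac1N\right)e^{-t^*_{n_A,n_B}}$ tends to $1$; since $1/N\to0$ automatically, everything reduces to establishing that $t^*_{n_A,n_B}\to0$.

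To this end I would set $a=-t^*_{n_A,n_B}$, which is positive because $t^*_{n_A,n_B}<0$. Rewriting~\eqref{eq:def-tab} as
$$\frac{1}{N}e^{(N+1)a}+\frac{n_A}{n_B}\left(1+\frac1N\right)e^{a}=1,$$
I would exploit that both summands on the left are strictly positive and sum to $1$. In particular the first summand is at most $1$, which gives the uniform bound $e^{(N+1)a}<N$, that is,
$$0<a<\frac{\ln N}{N+1}.$$
As $N\to\infty$ the right-hand side vanishes, hence $a\to0$ and $t^*_{n_A,n_B}\to0$, regardless of how the ratio $n_A/n_B$ behaves. Note that this bound is uniform and does not require the ratio to stabilize, which is convenient given the loose formulation of the limit.

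Combining the two steps, $\left(1+\frac1N\right)e^{-t^*_{n_A,n_B}}=\left(1+\frac1N\right)e^{a}\to1$, so the threshold converges to $1+n_A/n_B$, as claimed. The case $n_A=n_B$ requires no limiting argument, since the condition $|x_A-x_B|>2$ already coincides with $|x_A-x_B|>1+n_A/n_B$. The only delicate point is that~\eqref{eq:def-tab} is transcendental and defines $t^*_{n_A,n_B}$ only implicitly, so a priori the factor $e^{-(N+1)t^*_{n_A,n_B}}$ could grow uncontrollably; I expect this to be the main obstacle, and it is precisely the positivity-and-summation argument above — forcing the fast-growing exponential term to stay below $1$ — that tames it and pins $t^*_{n_A,n_B}$ to $0$ at rate $O(N^{-1}\ln N)$.
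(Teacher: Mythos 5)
Your proposal is correct and takes essentially the same approach as the paper: both arguments observe that the two summands in \eqref{eq:def-tab} are positive and sum to one, bound the fast exponential term by $1$ to get $0\le -t_{n_A,n_B}^*\le \frac{\log(n_A+n_B)}{n_A+n_B+1}\to 0$, and conclude that $e^{-t_{n_A,n_B}^*}\to1$ so the threshold is asymptotically $1+\frac{n_A}{n_B}$. The extra details you add (that $n_B\ge n_A$ forces $n_A+n_B\to\infty$, and that the $n_A=n_B$ case needs no limit) are harmless elaborations of the same argument.
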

\begin{proof}
From~\eqref{eq:def-tab} we deduce that
$$\frac{1}{n_A+n_B} e^{-(n_A+n_B+1)t_{n_A,n_B}^*}\le1$$
and then 
$$0\le (-t_{n_A,n_B}^*) \le \frac{\log{(n_A+n_B)}}{n_A+n_B+1}.$$
Consequently, $
\l(1+\frac{n_A}{n_B}\r)\l(1+\frac{1}{n_A+n_B}\r) e^{-t_{n_A,n_B}^*}$ is asymptotically equivalent to $1+\frac{n_A}{n_B}$ as $n_A\to\infty$.
\end{proof}

This corollary states that when clusters are very large, the simple condition $|x_A-x_B|> 1+\frac{n_A}{n_B}$ is necessary and sufficient for robustness against small perturbations. 
The interest for robust equilibria is motivated by the following intuition. 
Robust equilibria are more suitable to be limit points of ``real'' opinion dynamics system, which would be subject to various uncertainties and disturbances. Furthermore, as noted in~\cite{VDB-JMH-JNT:09a}, simulated solutions typically converge to robust equilibria.

\section{Krasovskii and \car Solutions}\label{sec:compare}

This section is devoted to compare the properties of Krasovskii solutions with those of \car solutions, and especially with those of the solutions considered in~\cite{VDB-JMH-JNT:09a}.

\subsection{Sliding Mode Solutions}
As a consequence of their definitions, the set of Krasovskii solutions may be larger than the set of solutions intended in a \car sense.
We now provide an example of a solution sliding on a discontinuity surface, proving that there are Krasovskii solutions to~\eqref{eq:DHK} which are not \car solutions. 

\begin{example}[Sliding mode]\label{ex:sliding-mode}
Let $N=3$ and consider a configuration $x$ in which $1>x_2-x_1>0$ and $x_3-x_2=1$. Then, $x$ is on a discontinuity surface due to the disconnection between agents $2$ and $3$. Then, for almost every time
$$\dot x \!\in \!\setdef{\alpha
\left[\begin{array}{c}
      x_2-x_1  \\
      1+x_1-x_2 \\
      -1 \\
   \end{array}
   \right]
\!+ (1-\alpha)
\left[
\begin{array}{c}
      x_2-x_1 \\
      x_1-x_2 \\
      0 \\
   \end{array}
   \right]}
   { \alpha\in[0,1]\!}\!.   $$
Since the normal vector to the discontinuity plane is $v_{\bot}=[0,-1,1]$, we have that
$$v_\bot \cdot \dot x=- 2 \alpha+x_2-x_1$$
is equal to zero if $\alpha=\frac12(x_2-x_1).$ Namely,  the Krasovskii
solution corresponding to such $\alpha$ does not exit 
 the discontinuity plane $x_3-x_2=1$ at time $0$, but it slides on it. The sliding solution takes into account the fact that opinions $x_3$ and $x_2$ may remain for a while at the threshold distance before reaching an equilibrium configuration.
\qed
\end{example}

It is an open question whether sliding mode solutions can be
attractive for the dynamics. However, we know
from~\cite{VDB-JMH-JNT:09a} that  a unique complete \car solution
exists for almost every initial condition. This implies that 
the set of initial conditions such that the
corresponding solutions converge to a sliding mode has measure zero, because solutions corresponding to those initial conditions would not be complete.

\subsection{Krasovskii and Proper Solutions}

In~\cite{VDB-JMH-JNT:09a} and~\cite{VDB-JMH-JNT:09b}, the authors consider a carefully defined subset of Carath\'{e}odory solutions to~\eqref{eq:DHK}: they call {\em proper solution} 
any Carath\'{e}odory solution $x(t)$ corresponding to an initial condition $x_0$ (called {\em proper initial condition}) such that 
\begin{itemize}
\item[a)] $x(t)$ is the unique Carath\'{e}odory solution to~\eqref{eq:DHK} with initial condition $x_0$ defined on $[0,+\infty )$;
\item[b)] the subset of $[0,+\infty )$ where $x(t)$ is not differentiable is at most countable, and has no accumulation points;
\item[c)] if $x_i(t)=x_j(t)$ for some $t$, then $x_i(t')=x_j(t')$ for all $t'\geq t$.
\end{itemize}

Moreover they prove that almost all $x\in \real ^N$ are proper initial conditions, and that proper solutions are contractive in the sense of Proposition~\ref{prop:DHK}, preserve the average of states and converge to clusters.
Our analysis has shown that the most significant properties of proper solutions also hold in the larger set of Krasovskii solutions.
Nevertheless, there are a few significant differences, which we detail in the following list.
\begin{itemize}
\item {\bf Existence.}
For any point $x_0$ in $\real^N$, there is a Krasovskii solution $x(t)$ such that $x(0)=x_0$. Instead, there are points, which belong to a certain set $P$ of measure zero, such that Carath\'eodory solutions starting at these points may either not exist or not be unique, so that proper solutions may not exist. 
The set $P$ includes points on the discontinuity surfaces, such that $x_i=x_j$ for some $i\neq j$. Note that Krasovskii solutions include sliding mode solutions as the one in Example~\ref{ex:sliding-mode}: such solutions belong for a positive duration of time to a discontinuity surface.
\item {\bf Uniqueness.} Proper solutions are unique by definition, whereas Krasovskii solutions are in general not unique. Note, however, that the results obtained in this note, and in particular convergence to a clustered configuration, hold for  every Krasovskii solution.
\item {\bf Regularity.} Krasovskii solutions are differentiable almost everywhere: proper solutions, by definition, are differentiable out of countable set with no accumulation point.
\item {\bf Order preservation.} Proper solutions preserve both inequalities and equalities between states, while we have remarked that Krasovskii solutions may not 
preserve equalities at discontinuities.
\item {\bf Connectivity.} 
Along proper solutions, the number of connected components in $\G(x(t))$ is nondecreasing in time. Similarly, along Krasovskii solutions the number of connected components in $\bar \G(x(t))$ is nondecreasing in time.
%

\item{\bf Robustness.}
We have defined an equilibrium to be robust if the addition of one perturbing agent does not make two clusters merge. 
In~\cite{VDB-JMH-JNT:09a}, robustness is defined for a suitable extension of the model, which provides agents with weights: an equilibrium is robust if the addition of one perturbing agent {\em of arbitrary small weight} does not make two clusters merge.
Our definition avoids defining this auxiliary weighted system, and the small-weight limit is replaced by a limit in the size of the clusters. The resulting analysis provides a necessary and sufficient condition, which takes the same simple and intuitive form as the condition in~\cite{VDB-JMH-JNT:09a}.
\end{itemize}

The main drawback of the approach taken in~\cite{VDB-JMH-JNT:09a} is difficulty in studying existence and continuation properties of those solutions. Instead, Krasovskii solutions are easier to deal with, as far as existence and continuation properties are considered. Moreover, results about proper solutions can be a posteriori obtained as particular cases of the more general results on Krasovskii solutions, which also include solutions starting at ``problematic points'' which may not admit proper solutions starting from them. For these reasons, we believe that Krasovskii solutions can be a useful tool in opinion dynamics, whenever the model involves discontinuities.


\bibliographystyle{plain}

\appendix

\section{Proof of Theorem~\ref{th:DHK-3}}
Without loss of generality, we restrict our attention to one pair of clusters with values $x_A$, $x_B$ and sizes $n_A$, $n_B$, and a perturbing agent with value $x_0\in (x_A,x_B)$. 
We also assume  that $n_A<n_B$ and that the agents in each cluster preserve the equality between their states.\footnote{
The latter assumption is in general a restriction, as we know from the proof of Theorem~\ref{th:DHK-2} that equalities between states are not always preserved along Krasovskii solutions at discontinuities. We will show later that in this case the assumption entails no loss of generality.}
%
Thanks to this assumption, we can limit ourselves to consider the following system of three equations,
\begin{equation}\label{eq:rob-full}
\begin{cases}
\dot x_B=s(x_B-x_A) (x_0-x_B)\\
\dot x_0=-n_A s(x_0-x_A)\, (x_0-x_A) + n_B s(x_B-x_0) (x_B-x_0)\\
\dot x_A= s(x_A-x_0) (x_0-x_A).
\end{cases}
\end{equation}
Then, by defining $x=x_0-x_A$ and $y=x_B-x_0$, we are left to study the following bidimensional (discontinuous) system,
\begin{equation}\label{eq:rob-reduced}
\begin{cases}
\dot x=- (n_A+1) s(x) x + n_B s(y) y\\
\dot y= n_A s(x) x - (n_B+1) s(y) y,
\end{cases}
\end{equation}
when the initial condition is such that $\l(x(0),y(0)\r)\in (0,1)\times(0,1).$
The case of the two original clusters $A,B$ merging is equivalent to system~\eqref{eq:rob-reduced} converging to the origin. 
Then, the core of our analysis consists in studying for this system the region of attraction of the origin.

\begin{lemma}\label{lem:rob-reduced}
System~\eqref{eq:rob-reduced} has a complete Krasovskii solution converging to the origin if and only if the initial condition belongs to the region $\mathcal R$ delimited by the positive $x$ and $y$ axes, the lines $\{x=1\}$ and $\{y=1\}$ and the branch of 
the curve $$\left( - \frac{1}{n_A+n_B} e^{-(n_A+n_B+1)t} + (1+\frac{1}{n_A+n_B}) e^{-t},
\frac{1}{n_A+n_B}e^{-(n_A+n_B+1)t}  +(1+\frac{1}{n_A+n_B}) e^{-t} 
\right)$$
when $t_{n_A,n_B}^*\le t\le 0$
and $t_{n_A,n_B}^*$ is defined as in~\eqref{eq:def-tab}.
\end{lemma}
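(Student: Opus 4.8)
The plan is to exploit that, on the open square $(0,1)\times(0,1)$, one has $s(x)=s(y)=1$, so that \eqref{eq:rob-reduced} reduces to the \emph{linear} system $\dot z = Mz$ with $z=(x,y)^{T}$ and
$$M=\begin{pmatrix} -(n_A+1) & n_B\\ n_A & -(n_B+1)\end{pmatrix}.$$
A direct computation gives $\operatorname{tr}M=-(n_A+n_B+2)$ and $\det M=n_A+n_B+1$, hence the eigenvalues are $\lambda_1=-1$ and $\lambda_2=-(n_A+n_B+1)$, with eigenvectors $(n_B,n_A)^{T}$ and $(1,-1)^{T}$ respectively. Since both eigenvalues are negative, the origin is a stable node, and so \emph{any} trajectory that remains in the square for all positive times converges to the origin. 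The whole problem therefore reduces to deciding, for a given initial datum, whether the solution stays in the open square or leaves it through one of the discontinuity lines $\{x=1\}$ or $\{y=1\}$.

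Next I would analyse the field along these two lines. On the top edge $\{y=1,\,0<x<1\}$ the inside field has $\dot y=n_A x-(n_B+1)<n_A-(n_B+1)<0$, so trajectories are pushed back into the square and the edge is never crossed from below; the field on the $\{y>1\}$ side (where $s(y)=0$) points \emph{away} from the edge as well, so no sliding occurs there. On the right edge $\{x=1,\,0<y<1\}$ the inside field gives $\dot x=-(n_A+1)+n_B y$, negative for $y<y_c$ and positive for $y>y_c$, where $y_c:=\frac{n_A+1}{n_B}$. Thus a trajectory reaching $\{x=1\}$ with $y>y_c$ crosses transversally into $\{x>1\}$; there $s(x)=0$ yields $\dot y=-(n_B+1)y<0$ and $\dot x=n_B y>0$, so $y\to0$ while $x$ remains $>1$, i.e. the solution converges to a point $(\hat x,0)$ with $\hat x>1$ and \emph{not} to the origin. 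As the outer field always points outward, there is no attractive sliding on $\{x=1\}$ either, and consequently a solution converges to the origin if and only if it never crosses $\{x=1\}$, equivalently iff it stays in the open square.

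It remains to identify the separatrix between the two behaviours. By the previous step the critical trajectory is the one \emph{tangent} to $\{x=1\}$ at $(1,y_c)$, where $\dot x=0$: solutions starting just inside it graze the edge and return, those just outside cross. Imposing on the general solution $z(t)=c_1 e^{-t}(n_B,n_A)^{T}+c_2 e^{-(n_A+n_B+1)t}(1,-1)^{T}$ the tangency conditions $x(0)=1$, $\dot x(0)=0$ fixes $c_2=-\frac{1}{n_A+n_B}$ and $c_1 n_B=1+\frac{1}{n_A+n_B}$, which reproduces exactly the first component of the stated curve; its second component is read off from the same coefficients. Tracing this trajectory backward from $t=0$ (where it meets the right edge at $(1,y_c)$), it reaches the top edge $\{y=1\}$ precisely when its $y$-coordinate equals $1$, which is the defining relation \eqref{eq:def-tab} for $t^{*}_{n_A,n_B}$; hence the relevant arc is $t^{*}_{n_A,n_B}\le t\le 0$, and $\mathcal R$ is bounded by the two axes, the segment of $\{x=1\}$ with $0\le y\le y_c$, the segment of $\{y=1\}$ with $0\le x\le x(t^{*})$, and this arc, as claimed. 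The equivalence is then closed by a continuity/non-crossing argument: in the open square the field is smooth, so solutions are unique and depend continuously on initial data, whence the set of data staying in the square is exactly the closed region cut off by the separatrix; interior points give the converging solution, points on the arc admit the grazing (converging) solution, and points outside cross $\{x=1\}$ and diverge. Completeness of the exhibited solutions follows from boundedness, as in Proposition~\ref{prop:DHK}.

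The main obstacle is the careful handling of the discontinuity lines. One must rule out attractive sliding on $\{x=1\}$ and $\{y=1\}$ and, because Krasovskii solutions are in general non-unique, verify that no exotic (sliding or corner) solution issuing from \emph{outside} $\mathcal R$ can sneak back to the origin. This is exactly what makes the statement an ``there exists a converging solution'' equivalence rather than an ``all solutions converge'' one, and it is the point at which the Krasovskii framework, as opposed to the \car one, must be controlled with care.
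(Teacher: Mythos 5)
Your proposal is correct and follows essentially the same route as the paper's proof: reduce to the linear system on the open square, show the top edge $\{y=1\}$ is unreachable from inside while the right edge $\{x=1\}$ can only be crossed when $y\ge\frac{n_A+1}{n_B}$, and identify the separatrix as the trajectory through $\bigl(1,\frac{n_A+1}{n_B}\bigr)$, with $t^*_{n_A,n_B}$ the backward time at which it meets $\{y=1\}$; the paper writes this trajectory down directly rather than via your eigenvector/tangency computation, but the content is identical. Two remarks. First, your explicit coefficients $c_1 n_B = 1+\frac{1}{n_A+n_B}$, $c_2=-\frac{1}{n_A+n_B}$ give second component $\tilde y(t)=\frac{1}{n_A+n_B}e^{-(n_A+n_B+1)t}+\frac{n_A}{n_B}\bigl(1+\frac{1}{n_A+n_B}\bigr)e^{-t}$, which is what the paper's proof (and the defining relation~\eqref{eq:def-tab}) uses; the curve as printed in the lemma statement is missing the factor $\frac{n_A}{n_B}$, so your derivation in fact corrects a typo there. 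Second, your inference ``both one-sided fields point away from the edge, hence no sliding'' is not valid in the Krasovskii framework: a convex combination can annihilate the normal component even on a repulsive surface, and indeed the paper's own Example~\ref{ex:sliding-mode} constructs precisely such a sliding solution; concretely, on $\{y=1\}$, $0<x<1$, the choice $\alpha=\frac{n_A x}{n_B+1}$ yields a sliding Krasovskii solution moving left along the edge. This flaw is harmless here only because, as you also argue, the edge $\{y=1\}$ cannot be reached by solutions issuing from the open square (the paper restricts initial conditions to $(0,1)\times(0,1)$), and on $\{x=1\}$ with $y>\frac{n_A+1}{n_B}$ every admissible convex combination has strictly positive $\dot x$, so crossing is forced; but the ``repulsive implies no sliding'' shortcut should be removed, since taken literally it would also deny the sliding phenomena the paper is at pains to exhibit.
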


\begin{proof}
As long as $(x(t),y(t))\in (0,1)\times(0,1)$, system~\eqref{eq:rob-reduced} reduces to the linear system 
\begin{equation}\label{eq:rob-linear}
\begin{cases}
\dot x=- (n_A+1) x + n_B y\\
\dot y= n_A x - (n_B+1) y.
\end{cases}
\end{equation}
Being system~\eqref{eq:rob-linear} asymptotically stable, we argue that all solutions, which do not leave the unit square, converge to zero. Conversely, outside the unit square the system is not asymptotically stable: for instance, if $x>1$ and $y<1$, we have
$$\begin{cases}
\dot x= n_B  y\\
\dot y=  - (n_B+1)  y.
\end{cases}
$$
Then, we only need to find which solutions leave the unite square. To this goal, continuity of solutions leads us to consider the system in the limit for $x$ or $y$ approaching $1$ (from below).
When $y\to1^-$, system~\eqref{eq:rob-linear} becomes $$
\begin{cases}
\dot x=- (n_A+1) x + n_B\\
\dot y= n_A x - (n_B+1).
\end{cases}$$
Since $n_A x - (n_B+1)< n_A  - (n_B+1)< 0,$ solutions can not reach the discontinuity $\{y=1\}$.
When $x\to1^-$, system~\eqref{eq:rob-linear} becomes $$ \begin{cases}
\dot x=- (n_A+1)+ n_B y\\
\dot y= n_A - (n_B+1) y,
\end{cases}$$
implying that solutions may cross the discontinuity $\{x=1\}$ if and only if $y\ge \frac{n_A+1}{n_B}.$

Let us then consider the solution to~\eqref{eq:rob-linear}, passing at time $t=0$ by the point $\l(1,\frac{n_A+1}{n_B}\r),$ which can be written in closed form as 
$$ \begin{cases}
\tilde x(t)=\displaystyle- \frac{1}{n_A+n_B} e^{-(n_A+n_B+1)t}+\l(1+\frac{1}{n_A+n_B}\r) e^{-t}
\\
\tilde y(t)=\displaystyle  \frac{1}{n_A+n_B} e^{-(n_A+n_B+1)t}+\frac{n_A}{n_B}\l(1+\frac{1}{n_A+n_B}\r) e^{-t}.
\end{cases}$$
Note that, by definition, $\tilde y(t)=1$ if $t=t^*_{n_A,n_B}$. As $t^*_{n_A,n_B}$ is finite, the limited region $\mathcal{R}$ is well defined in the statement of the theorem. 
By uniqueness of the solutions to~\eqref{eq:rob-linear}, we argue that a solution to~\eqref{eq:rob-linear} may leave the unit square if and only its initial condition is ``above'' the solution $(\tilde x,\tilde y)$, {\it i.e.}, outside the region $\mathcal{R}.$
\end{proof}

The following corollary derives from Lemma~\ref{lem:rob-reduced} a necessary and sufficient condition for solutions to~\eqref{eq:rob-reduced} to converge to zero, in terms of the sum $x(0)+y(0).$

\begin{corollary}\label{corol:rob-reduced-bis}
Let $\bar{d}=\l(1+\frac{n_A}{n_B}\r)\l(1+\frac{1}{n_A+n_B}\r) e^{-t_{n_A,n_B}^*}$. If the initial condition is such that $x(0)+y(0)> \bar{d},$ then there exists no solution to~\eqref{eq:rob-reduced} which converges to zero. Conversely, for any $d\le \bar{d},$ one can find a pair $(x(0),y(0))$ such that $x(0)+y(0)=d$ and there is a solution originating from $(x(0),y(0))$ which converges to zero.
\end{corollary}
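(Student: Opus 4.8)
The plan is to reduce the geometric characterization in Lemma~\ref{lem:rob-reduced} to a statement about the single scalar $x(0)+y(0)$, by studying the linear functional $(x,y)\mapsto x+y$ on the region $\mathcal R$. By the Lemma, a solution of~\eqref{eq:rob-reduced} converges to the origin exactly when its initial condition lies in $\mathcal R$; so it suffices to show that the maximum of $x+y$ over $\mathcal R$ equals $\bar d$ and that every value of $x+y$ between $0$ and $\bar d$ is realized by some point of $\mathcal R$.

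First I would evaluate $x+y$ along the boundary curve $(\tilde x(t),\tilde y(t))$ given in the proof of Lemma~\ref{lem:rob-reduced}. Adding the two closed-form expressions, the $e^{-(n_A+n_B+1)t}$ terms cancel and one is left with
$$\tilde x(t)+\tilde y(t)=\l(1+\frac{n_A}{n_B}\r)\l(1+\frac{1}{n_A+n_B}\r)e^{-t},$$
which is strictly decreasing in $t$. Over the parameter range $t_{n_A,n_B}^*\le t\le0$ the sum therefore ranges between $\frac{n_A+n_B+1}{n_B}$ (attained at $t=0$, i.e.\ at the endpoint $(1,\frac{n_A+1}{n_B})$ on $\{x=1\}$) and exactly $\bar d$ (attained at $t=t_{n_A,n_B}^*$, i.e.\ at the endpoint $(\tilde x(t_{n_A,n_B}^*),1)$ on $\{y=1\}$, using $\tilde y(t_{n_A,n_B}^*)=1$ from~\eqref{eq:def-tab}).

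Next I would show that $\bar d$ is the maximum of $x+y$ over all of $\mathcal R$, not merely over the curve. Since $x+y$ is linear, its maximum on the compact set $\mathcal R$ is attained on the boundary, which consists of parts of the two coordinate axes, of the lines $\{x=1\}$ and $\{y=1\}$, and of the curve. On the axis portions $x+y\le1\le\bar d$; on the bounding segment of $\{x=1\}$ the sum $1+y$ is largest at the junction $(1,\frac{n_A+1}{n_B})$ with the curve; on the bounding segment of $\{y=1\}$ the sum $x+1$ is largest at the junction $(\tilde x(t_{n_A,n_B}^*),1)$ with the curve; and along the curve itself the maximum is $\bar d$ by the previous step. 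Hence the maximum of $x+y$ over $\mathcal R$ is $\bar d$, attained at $(\tilde x(t_{n_A,n_B}^*),1)\in\mathcal R$.

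The two claims then follow at once. For necessity, $x(0)+y(0)>\bar d$ forces $(x(0),y(0))\notin\mathcal R$, so Lemma~\ref{lem:rob-reduced} rules out any solution converging to the origin. For sufficiency, $x+y$ is continuous on the connected set $\mathcal R$ and takes the value $0$ at the origin and $\bar d$ at $(\tilde x(t_{n_A,n_B}^*),1)$; by the intermediate value theorem every $d\in[0,\bar d]$ is attained at some $(x(0),y(0))\in\mathcal R$, from which Lemma~\ref{lem:rob-reduced} supplies a solution converging to the origin. The step I expect to require the most care is the reduction to the functional $x+y$: the whole argument hinges on the cancellation that makes $\tilde x+\tilde y$ a pure exponential, and hence monotone, which is what pins the extreme value of the functional to the single curve-endpoint $(\tilde x(t_{n_A,n_B}^*),1)$ rather than to an interior corner of the unit square.
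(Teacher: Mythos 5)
Your proof is correct and takes essentially the same route as the paper's: both rest on the cancellation that makes $\tilde x(t)+\tilde y(t)=\l(1+\frac{n_A}{n_B}\r)\l(1+\frac{1}{n_A+n_B}\r)e^{-t}$ a pure (hence monotone) exponential along the boundary curve, so that the extreme values of $x+y$ on $\mathcal{R}$ are $\bar d$ at $t=t^*_{n_A,n_B}$ and $1+\frac{n_A+1}{n_B}$ at $t=0$, after which Lemma~\ref{lem:rob-reduced} converts membership in $\mathcal{R}$ into convergence to the origin. Your explicit maximization of the linear functional over each boundary piece and the intermediate-value step merely spell out what the paper leaves implicit in its final "if and only if" deduction.
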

\begin{proof}
Let us then consider again the solution to~\eqref{eq:rob-linear} passing by $\l(1,\frac{n_A+1}{n_B}\r).$
Note that $$\tilde x(t)+\tilde y(t)=\l(1+\frac{n_A}{n_B}\r)  \l(1+\frac{1}{n_A+n_B}\r) e^{-t}.$$
This implies that 
$$\max\setdef{\tilde x(t)+\tilde y(t)}{t\in[t^*_{n_A,n_B},0]}=\tilde  x(t^*_{n_A,n_B})+\tilde y(t^*_{n_A,n_B})=\bar d$$ and 
$$\min\setdef{\tilde x(t)+\tilde y(t)}{t\in[t^*_{n_A,n_B},0]}= \tilde x(0)+\tilde y(0)=1+\frac{n_A+1}{n_B}.$$
By Lemma~\ref{lem:rob-reduced} and this remark, we deduce the following fact. Being $d>0$ fixed, we can find $(x(0),y(0))$ such that $x(0)+y(0)=d$ and $(x(0),y(0))\in \mathcal {R}$ if and only if $d<\bar d$. This proves the statement of the corollary.
\end{proof}

In order to infer Theorem~\ref{th:DHK-3} from Corollary~\ref{corol:rob-reduced-bis} we still need to remove the assumption that equalities be preserved along the evolution. To do that, we discuss the two cases in which this assumption could be restrictive, that is, when the solution starts at a discontinuity because either $x_B-x_0=1$ or $x_0-x_A=1.$
\begin{enumerate}[i)]
\item 
We assume that $x_0(0)=x_A(0)+1$ and $x_B-x_0>\frac{n_A+1}{n_B}$. (Indeed, when $x_B-x_A\le1+\frac{n_A+1}{n_B}$ we already know that there is a solution leading to cluster coalescence.)
Then, we note that $\dot x_0(0)=n_B(x_B-x_0)-n_A$ while $\dot x_a(0)\in[0,1]$ for every $a\in A.$ As $x_B-x_0>\frac{n_A+1}{n_B}$ implies $\dot x_0(0)>1$, the solution may not stay on the discontinuity, and in particular is such that $x_0(t)-x_a(t)>1$ for every $t>0$.
\item 
We assume that $x_0(0)=x_B(0)-1$.
Then, we note that $\dot x_0(0)=n_B-n_A(x_0-x_A)$ while $\dot x_b(0)\in[-1,0]$ for every $b\in B.$ As $n_B>n_A$ implies $\dot x_0(0)>0$, the solution may not stay on the discontinuity, and in particular is such that $x_B(t)-x_0(t)<1$ for every $t>0$.
\end{enumerate}

In view of this discussion, we infer that it is not restrictive for the robustness analysis  to assume that equalities be preserved, and we conclude that the state of the perturbing agent $x_0$ can be chosen in such a way to make the two clusters merge if and only if $x_B-x_A\le \bar{d}$. This proves the statement of Theorem~\ref{th:DHK-3} when $n_A<n_B$. The case of $n_A=n_B$ is much simpler and is left to the reader.

\end{document}